\documentclass[reqno, 11pt, a4paper]{amsart} 

\usepackage[T5,T1]{fontenc}
\usepackage{mlmodern}

\usepackage[utf8]{inputenc}
\allowdisplaybreaks
\usepackage{amsfonts}
\usepackage{amsmath}
\usepackage{amssymb}
\usepackage{amsthm}
\usepackage[text={33pc,605pt},centering, margin=1.25in]{geometry}     

\usepackage{mathrsfs} 
\usepackage[dvipsnames]{xcolor}

\usepackage{bbm}
\usepackage{mathtools}

\usepackage{dsfont}
 
\usepackage[pagebackref=true,colorlinks=true, linkcolor=Blue, citecolor=Blue, pdfencoding=auto, psdextra]{hyperref}
\renewcommand*\backref[1]{\ifx#1\relax \else (Cited on #1) \fi}

\usepackage{appendix}
\usepackage{enumitem}
\usepackage{float}

\usepackage{tikz}
\usepackage{tikz-cd} 
\usetikzlibrary{arrows, arrows.meta}

\usepackage[nameinlink]{cleveref} 
\usepackage{scalerel}[2016/12/29]

\theoremstyle{plain}
\newtheorem{definition}{Definition}

\newtheorem{lemma}[definition]{Lemma}
\newtheorem{corollary}[definition]{Corollary}
\newtheorem{theorem}[definition]{Theorem}
\newtheorem{remark}[definition]{Remark}

\theoremstyle{definition}

\numberwithin{definition}{section}
\numberwithin{equation}{section}

\DeclareMathOperator{\argmin}{arg\,min}

\newcommand*{\R}{\mathbb{R}}

\newcommand*{\Z}{\mathbb{Z}}

\newcommand*{\N}{\mathbb{N}}

\newcommand{\abs}[1]{\left\lvert #1 \right\rvert}

\renewcommand*{\d}{\mathrm{d}}
\newcommand*{\e}{\mathrm{e}}

\makeatletter
\newcommand{\subalign}[1]{%
  \vcenter{%
    \Let@ \restore@math@cr \default@tag
    \baselineskip\fontdimen10 \scriptfont\tw@
    \advance\baselineskip\fontdimen12 \scriptfont\tw@
    \lineskip\thr@@\fontdimen8 \scriptfont\thr@@
    \lineskiplimit\lineskip
    \ialign{\hfil$\m@th\scriptstyle##$&$\m@th\scriptstyle{}##$\hfil\crcr
      #1\crcr
    }%
  }%
}
\makeatother

\crefname{equation}{}{}

\title[Asymptotically complete free-energy dissipation]{Asymptotically complete free-energy dissipation: \\ a coarse MLSI holds at any positive temperature}

\author[J. K\"oppl]{Jonas K\"oppl}
\author[Y. Steenbeck]{Yannic Steenbeck}

\address[Jonas K\"oppl]{TU Braunschweig, Institut für Mathematische Stochastik,
Germany.}
\email{jonas.koeppl@tu-braunschweig.de}
\address[Yannic Steenbeck]{TU Braunschweig, Institut für Mathematische Stochastik,
Germany.}
\email{yannic.steenbeck@tu-braunschweig.de}

\keywords{Entropy dissipation, interacting particle systems, relative entropy density, Gibbs measures, modified logarithmic Sobolev inequalities, free energy}

\subjclass[2020]{Primary 82C22; Secondary 60K35, 82C20, 60J25, 39B62, 60F99, 82C03}

\date{\today}

\usepackage{soul}

\begin{document}

\begin{abstract}
    Everybody learns in school that an out-of-equilibrium system coupled to a heat bath at a fixed temperature evolves to thermodynamic equilibrium as time goes on, and the free energy will only decrease on its way there.
    At least since \cite{holley_free_1971}, mathematicians know this too, in the modest context of classical Ising Glauber dynamics.
    But does the free energy also asymptotically decrease to the free energy of an equilibrium state?
    A typical school kid would say ''yes, of course'', but since the free energy is only lower semicontinuous, this question is less straightforward than it initially seems. 
    To the best of our knowledge, apart from the uniqueness regime, where one can make use of classical functional inequalities, this question has not previously been resolved rigorously.
    We prove the asymptotically complete dissipation of the free energy for the classical Ising Glauber dynamics by introducing a \textit{coarse} modified log-Sobolev inequality, which holds at every positive temperature, in particular in the phase-coexistence regime.
\end{abstract}
\maketitle

\section{The problem of asymptotically complete free-energy dissipation}

In the context of stochastic dynamics, it is a widespread theme to derive and control the weak convergence of an initial distribution \(\mu\) along the orbits \(t \mapsto \mu S_t\) of a Markov semigroup \((S_t)_{t \geq 0}\) to reversible measures of the dynamics via relative entropy methods. 
Typically, there is a bounded-from-below Lyapunov functional \(F\) on probability measures called the \emph{free energy} which is modulo constants a sort of relative entropy with respect to the set of reversible measures.
In this situation, there will be a \emph{de Bruijn-type identity} available that reads
\begin{align}\label{eq:intro_de_bruijn}
    F(\mu S_t)
    = F(\mu S_0) - \int_{0}^{t} \sigma(\mu S_s) \d s,
\end{align} with \(\sigma\) being a non-negative \emph{entropy production} functional.
Then, one identifies the minima of \(F\), and also the zero set of \(\sigma\), as the set of reversible measures of \((S_t)_{t \geq 0}\). Consequently, \(F\) has to decay along the orbits \(t \mapsto \mu S_t\). If the entropy production functional \(\sigma\) is lower semicontinuous, one can also derive that \( \mu S_t\) has to converge weakly to the set of reversible measures (\(=\) the zero set of \(\sigma\)) as \(t \to \infty\).

This phenomenon of free-energy dissipation is sometimes described as a mathematical mirroring of the second law of thermodynamics and various examples can be found in the probability theory and mathematical statistical mechanics literature, cf.~\cite{holley_free_1971,higuchi_results_1975, Sullivan1976,MoulinOllagnier1977,kunsch_time_1984,jahnel_attractor_2019, Shriver2023,JK25,JKSZ26} and many more.

However, to the best of our knowledge, there is virtually no discussion in the literature about the question of the \emph{asymptotic completeness} of free-energy dissipation, meaning that for all initial distributions \(\mu\) one has 
\begin{align}
    F(\mu S_t)
    \,\xrightarrow[t \to \infty]{} \,
    \min_\nu F(\nu),
\end{align} although this question seems of physical relevance to us, at least in the context of the mathematical toy models we consider here.
\medskip 

Note that typically the free energy functional \(F\) is not continuous but only lower semicontinuous, so the asymptotically complete dissipation is not a simple consequence of the weak convergence of \((\mu S_t)_{t\geq 0}\) to the set of reversible measures \(\mathcal{R}\), which is also the set of minimizers of \(F\).

In many models with a temperature or interaction strength parameter, there is a regime of ''sufficiently high temperatures'' or ''sufficiently weak  interactions'' with
\begin{enumerate}
    \item a unique reversible measure \(\nu\) of \((S_t)_{t \geq 0}\) with \(\nu = \argmin_\mu F(\mu)\),
    \item a \emph{modified log-Sobolev inequality} of the form
    \begin{align}\label{eq:intro_MLSI}
        \sigma(\mu) \geq \kappa [F(\mu) - \min_\nu F(\nu)], \quad \kappa > 0,
    \end{align} uniform in all probability distributions \(\mu\).
\end{enumerate}
Of course, in such a regime we have that \(F(\mu S_t)\) converges to \(\min_{\widetilde{\nu}} F(\widetilde{\nu}) = F(\nu)\) as \(t \to \infty\) at least with exponential speed \(\kappa\) by combining \eqref{eq:intro_de_bruijn} with \eqref{eq:intro_MLSI}. 
However, in most situations, the validity of a modified log-Sobolev inequality implies the uniqueness of the reversible measure, cf. e.g. ~\cite{Zitt2008, CMRU2020, CR2022, S26}. This makes the above approach lose some of its appeal: a decisive strength of the general framework introduced in \cite{holley_free_1971} is that it also works in the physically often more interesting situations of \emph{phase-coexistence}, i.e.,  facing multiple reversible measures.

We present the next best thing in general regimes of phase-coexistence, a 
\begin{definition}[Meta-definition of coarse modified log-Sobolev inequality]\label[definition]{definition:coarse_modified_log_sobolev}
    We say that a \emph{coarse modified log-Sobolev inequality} \emph{(coarse MLSI)} holds if for every \(\epsilon > 0\), there is some \(\delta = \delta(\varepsilon) > 0\) such that for all (allowed) initial distributions \(\mu\) it holds that
    \begin{align}\label{equation:intro_coarse_MLSI}
        \sigma(\mu)
        \geq \delta \, ([F(\mu) - \min_\nu F(\nu)] - \epsilon).
    \end{align}
\end{definition}
While the classical modified log-Sobolev inequality \eqref{eq:intro_MLSI} provides a \textit{uniform} positive bound on the slope, the coarse MLSI merely prevents the entropy production from becoming too small when there is still some free energy left to dissipate. 

The usefulness of a coarse modified log-Sobolev inequality is twofold:
\begin{enumerate}
    \item It will (in our concrete setting) hold at any positive temperature
    \item and it allows us to close the argument and show 
\end{enumerate}

\begin{theorem}[Meta-theorem: coarse MLSI \(\Rightarrow\) asymptotic completeness of free-energy dissipation]\label[theorem]{theorem:intro_free_energy_dissipation_from_MLSI}
    Suppose a de Bruijn-type identity \eqref{eq:intro_de_bruijn} and a coarse MLSI \eqref{equation:intro_coarse_MLSI} hold. Then
    \begin{align*}
        F(\mu S_t) \xrightarrow[t \to \infty]{} \min_\nu F(\nu)
    \end{align*} for all (allowed) initial distributions \(\mu\).
\end{theorem}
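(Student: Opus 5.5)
Set \(f(t) := F(\mu S_t) - \min_\nu F(\nu) \geq 0\). The plan is a Grönwall-type comparison argument driven by \eqref{eq:intro_de_bruijn} and \eqref{equation:intro_coarse_MLSI}. We assume \(F(\mu) < \infty\), which is what ``allowed'' should mean. Then \eqref{eq:intro_de_bruijn} shows that \(f\) is finite, non-increasing and absolutely continuous, with \(f'(t) = -\sigma(\mu S_t)\) for almost every \(t\). Since \(f\) is non-increasing and bounded below by \(0\), the limit \(L := \lim_{t\to\infty} f(t) \geq 0\) exists. It remains to show \(L = 0\), and it suffices to show \(L \leq \epsilon\) for every \(\epsilon>0\).

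Fix \(\epsilon > 0\), let \(\delta = \delta(\epsilon)\) be as in \cref{definition:coarse_modified_log_sobolev}, and assume \(\mu S_s\) is again an allowed initial distribution for every \(s \geq 0\). This invariance is automatic if ``allowed'' means finite free energy, or translation invariance plus finite free energy. Applying the coarse MLSI along the orbit gives, for almost every \(t\),
\begin{align*}
    f'(t) = -\sigma(\mu S_t) \leq -\delta\,\bigl(f(t) - \epsilon\bigr).
\end{align*}
There are two ways to conclude.

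\emph{Direct route.} Suppose \(L > \epsilon\). Then \(f(t) - \epsilon \geq L - \epsilon > 0\) for all \(t\), so \(\sigma(\mu S_t) \geq \delta(L-\epsilon)\). Integrating in \eqref{eq:intro_de_bruijn} gives \(f(t) \leq f(0) - \delta(L-\epsilon)t \to -\infty\), which contradicts \(f \geq 0\). Hence \(L \leq \epsilon\).

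\emph{Quantitative route.} Set \(g := (f-\epsilon)\e^{\delta t}\). Then \(g' \leq 0\) almost everywhere, and \(g\) is absolutely continuous on compact intervals. It follows that \(f(t) \leq \epsilon + (f(0)-\epsilon)\e^{-\delta t}\), which again gives \(L \leq \epsilon\).

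Letting \(\epsilon \downarrow 0\) yields \(F(\mu S_t) \to \min_\nu F(\nu)\).

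The only point needing care is that the right regularity is available. We need either the integral form of \eqref{eq:intro_de_bruijn} together with measurability of \(s \mapsto \sigma(\mu S_s)\), or at least non-negativity of \(\sigma\) plus the integral identity. The contradiction argument uses only the integral identity, so it is the version I would write. I would also make explicit the standing assumptions that the class of allowed distributions is invariant under the semigroup and that \(\min_\nu F(\nu)\) is attained and finite.
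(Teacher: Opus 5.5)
Your proposal is correct, and your direct route is the paper's own contradiction argument. The paper assumes \(f(\infty)>0\), takes \(\epsilon=f(\infty)/2\), and gets the uniform bound \(\sigma(\mu S_s)\geq \tfrac{\delta}{2}f(\infty)\), which contradicts \(f\geq 0\) via the de Bruijn identity; you instead fix \(\epsilon\) first and show \(L\leq\epsilon\), which is equivalent. Your Grönwall route adds the explicit bound \(h(\mu S_t)\leq \epsilon+(h(\mu)-\epsilon)\mathrm{e}^{-\delta(\epsilon)t}\), which is always at least as good as the linear bound \(\max\{2\epsilon,\, h(\mu)-\delta(\epsilon)\epsilon t\}\) recorded in the paper's subsequent remark.
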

\begin{proof}
    Set \(f(t) := F(\mu S_t) - \min_{\nu} F(\nu)\) and suppose that \(f(\infty) := \lim_{t \to \infty} f(t) = \inf_{t \geq 0} f(t) > 0\). Now, for \(\epsilon := f(\infty) / 2\) we get by the coarse MLSI \eqref{equation:intro_coarse_MLSI} that there is some \(\delta > 0\) such that for all \(s \geq 0\) it holds that
    \begin{align*}
        \sigma(\mu S_s) 
        \geq \delta (f(s) - \epsilon)
        \geq \frac{\delta}{2} f(\infty).
    \end{align*} Together with \eqref{eq:intro_de_bruijn}, which essentially says that \(f'(s) = -\sigma(\mu S_s)\) for a.a. \(s \geq 0\), this is in obvious contradiction with the non-negativity of \(f\).
\end{proof}
\begin{remark}\label[remark]{remark:intro_in_principle_quantitative}
    In principle, this gives us even a quantitative result by a Bihari-LaSalle-type inequality. Let \(h(\mu) = F(\mu) - \min_\nu F(\nu)\) and
    \begin{align*}
        \delta_{\ast}(\epsilon) 
        = \inf_{\mu \text{ allowed} \colon h(\mu) > \epsilon} \,\, \frac{\sigma(\mu)}{h(\mu) - \epsilon}.
    \end{align*} 
    We have \(\delta_\ast(\epsilon) = \sup\{\delta > 0 \,\colon\,  \sigma(\mu) \geq \delta (h(\mu) - \epsilon) \text{ holds for all allowed } \mu \}  > 0\) by the coarse MLSI \Cref{equation:intro_coarse_MLSI}. If we additionally assume that \(\delta_\ast(\epsilon) < \infty\) for all \(\epsilon < \sup_\mu h(\mu)\), we get
    \begin{align}
        [F(\mu S_t) - \min_\nu F(\nu)]
        \leq \Phi([F(\mu) - \min_\nu F(\nu)], \cdot)^{-1}(t)
    \end{align}  where \(\Phi(x_0, x) = \int_{x}^{x_0} \frac{\mathrm{dy}}{\Psi(y)}\) and \(\Psi(y) = \sup_{0 < \epsilon < y} \delta_\ast(\epsilon)(y - \epsilon)\).
    In particular, for all \(M > 0\) it holds that 
    \begin{align*}
        \sup\{F(\mu S_t) \,\colon\, \mu \text{ allowed initial distribution}, F(\mu) \leq M \} 
        \xrightarrow[t \to \infty]{} 
        \min_\nu F(\nu),
    \end{align*} although that also follows from the simpler inequality \(h(\mu S_t) \leq \max\{2\epsilon,  h(\mu) - \delta(\epsilon)\epsilon t\}\), valid for any \(\epsilon > 0\), without any further assumptions on \(\delta_\ast\).
\end{remark}

In the next section, we will show that the two promises (1), (2) from above can be kept in the setting of classical Ising Glauber dynamics. Although we establish the coarse MLSI only for Ising Glauber dynamics, the definition is completely abstract and may be useful for other interacting particle systems in which the classical MLSI fails because of phase coexistence.

\section{Main result in the concrete setting of Ising Glauber dynamics}

As already indicated, we now specialize to the concrete setting, and discuss asymptotically complete free-energy dissipation in the context of the probably best-known toy model of statistical mechanics, the Ising model, together with its Glauber dynamics.

In \Cref{subsection:setting_and_notation} we will set the stage and very concisely introduce the setting. Then, we proceed in \Cref{subsection:main_result_ising_glauber} by formulating the main results, namely the asymptotically complete free-energy-dissipation and the coarse MLSI in this setting. Finally, in \Cref{subsection:proof_coarse_MLSI}, we show that a coarse MLSI holds at any positive temperature.

\subsection{Setting and notation}\label[section]{subsection:setting_and_notation}

We will now introduce basic notation and the semigroup \((S_t)_{t \geq 0}\) with its reversible measures which we want to discuss as an example of asymptotically complete free-energy dissipation in the spirit of the introduction.

\subsubsection{The Ising Glauber semigroup and (reversible) Gibbs measures}
Consider the space of spin-configurations \(\Omega = \{ \pm 1 \}^{\Z^d}\) and the formal Hamiltonian, with \emph{inverse temperature} \(\beta > 0\), given by
\begin{align}
    H(\omega) 
    =  H_\beta(\omega) 
    = -\frac{\beta}{2}\sum_{\substack{x, y \in \Z^d\\ x \sim y}} \omega_x \omega_y,
\end{align} where \(y \sim x\) means \(y\) and \(x\) are nearest-neighbours in \(\Z^d\).
For every \emph{inverse temperature} \(\beta > 0\), there is a non-empty set of \emph{Gibbs measures} \(\mathcal{G}(H)\), probability distributions on \(\Omega\), associated with \(H\) \cite{Georgii2011, friedli_velenik_2017}. We denote the set of all translation-invariant probability measures on \(\Omega\) by \(\mathcal{P}_\theta\) and the set of all translation-invariant Gibbs measures by \(\mathcal{G}_{\theta}(H) = \mathcal{G}(H) \cap \mathcal{P}_\theta\).
It is also a celebrated result of mathematical statistical mechanics that for \(d \geq 2\), there is a regime \(\beta \in (\beta^*, \infty)\) of phase-coexistence, i.e., with \(\vert \mathcal{G}_\theta(H) \vert > 1\) for \(H = H_\beta\), cf.~\cite{Georgii2011, friedli_velenik_2017}.
The set of translation-invariant Gibbs measures is known to coincide with the set \(\mathcal{R}_\theta = \mathcal{R}_\theta(\beta)\) of translation-invariant probability measures which are reversible with respect to the classical Ising Glauber dynamics \((S_t)_{t \geq 0}\) acting on \(C(\Omega)\), the space of continuous functions \(f:\Omega \to \R\), which we will describe in the following, cf.~\cite{holley_free_1971}.

The semigroup \((S_t)_{t \geq 0}\) on \(C(\Omega)\) is given as \(S_t = \e^{t \mathscr{L}}\) with (formal) generator \(\mathscr{L}\) described in the following, cf.~\cite{holley_free_1971, Martinelli1999, liggett_interacting_2005}.
For \(\omega \in \Omega\) and \(x \in \Z^d\), let \(\omega^x\) be the configuration \(\omega\) but flipped in \(x\), i.e., \(\omega^x = \mathrm{flip}_x(\omega)\), where \(\mathrm{flip}_x \colon \Omega \to \Omega\),
\begin{align*}
    (\mathrm{flip}_x(\omega))_y 
    = \begin{cases}
        -\omega_x, & \text{ if } y= x, \\
        \omega_y, & \text{ otherwise}.
    \end{cases}
\end{align*}
With this notation, the formal generator reads
\begin{align}
    (\mathscr{L} f)(\omega)
    = \sum_{x \in \Z^d} c(x, \omega) [f(\omega^x) - f(\omega)],
\end{align} where the flip rates \(c\) are given by
\begin{align*}
    c(x, \omega)
    = e^{-\beta \sum_{y \sim x} \omega_x \omega_y}, \quad x \in \Z^d, \omega \in \Omega.
\end{align*}
Note that we suppress the inverse temperature \(\beta\) in the notation of the semigroup \((S_t)_{t \geq 0}\), but of course it depends on it.

For any (translation-invariant) probability measure \(\mu\) on \(\Omega\) there is a well-defined notion of evolution under this semigroup, and we denote in this context for any \(t \geq 0\) by \(\mu S_t\) the (translation-invariant) probability measure on \(\Omega\) with
\begin{align*}
    (\mu S_t)[f]
    = \mu[S_t f], \quad f \in C(\Omega), \,f \geq 0.
\end{align*} The probability measure \(\mu\) being reversible with respect to \((S_t)_{t \geq 0}\) means that \(\mu[(S_t f) g] = \mu[f S_t g]\) for any \(f, g \in C(\Omega)\) with \(f, g \geq 0\) and all \(t \geq 0\). Note that this in particular implies that \(\mu\) is (time-)stationary, i.e., \(\mu S_t = \mu\) for all \(t \geq 0\). In general, time-stationarity does not imply reversibility, but it is known here that the set of translation-invariant stationary probability measures also coincides with \(\mathcal{G}_\theta(H)\), cf.~\cite{holley_free_1971}.

\subsubsection{Generalities on configurations and measures}

For bounded \(\Lambda \Subset \Z^d\), we denote \(\Omega_{\Lambda} =  \{\pm 1 \}^{\Lambda}\) and \(\omega_\Lambda \in \Omega_\Lambda\) for the restriction/projection of \(\omega \in \Omega\) to \(\Lambda\). Denote, for \(\omega, \eta \in \Omega\), by \(\omega_\Lambda \eta_{\Lambda^c} \in \Omega\) the configuration with \((\omega_\Lambda \eta_{\Lambda^c})_x = \omega_x \mathbbm{1}_{x \in \Lambda} + \eta_x \mathbbm{1}_{x \in \Lambda^c}\), \(x \in \Z^d\).
Write \(\mathcal{F}_\Lambda\) for the \(\sigma\)-algebra on \(\Omega\) which is generated by projection \(\Omega_\Lambda\), i.e., which contains information about the configurations inside \(\Lambda\). For probability measures \(\mu, \nu\) on \(\Omega\), we denote \(\mu_\Lambda = \mu_{\vert \mathcal{F}_\Lambda}\).
We also fix a sequence \((\Lambda_n)_{n \in \mathbb{N}}\) of increasing cubes given by \(\Lambda_n := [-n, n]^d \cap \Z^d\).

Basic to our Lyapunov functional, the specific free energy, will be the notion of relative entropy \(I(\mu \,\vert\, \nu)\) between two (probability) measures \(\mu, \nu\), which is given by
\begin{align*}
    I(\mu \,\vert\, \nu)
    = \begin{cases}
        \mu[\log \frac{\d\mu}{\d\nu}] , &\text{if } \mu  \ll\nu, \\
        \infty, & \text{otherwise}.
    \end{cases}
\end{align*} 
For translation-invariant (probability) measures \(\mu, \nu\) on \(\Omega\) it will be important to consider the \emph{relative entropy density} \(h(\mu \,\vert \nu)\) defined, whenever the limit exists, by
\begin{align*}
    h(\mu \,\vert\, \nu)
    = \lim_{n \to \infty} \frac{I_{\Lambda_n}(\mu \,\vert\, \nu)}{\vert \Lambda_n \vert},
\end{align*} where \(I_{\Lambda_n}(\mu \,\vert\, \nu) := I(\mu_{\Lambda_n} \,\vert\, \nu_{\Lambda_n})\), cf.~\cite[Chapter 15]{Georgii2011}.
One further object of importance is the \emph{specific entropy}
\begin{align*}
    s(\mu)
    := -\lim_{n \to \infty} \frac{I(\mu_{\Lambda_n} \,\vert\, \omega_{\Lambda_n})}{\vert \Lambda_n \vert},
\end{align*} with \(\d \omega_{\Lambda_n}\) the counting measure on \(\Omega_{\Lambda_n}\).
Furthermore, we define for a translation-invariant  \(\mu\) on \(\Omega\) the \emph{specific energy}
\begin{align*}
    H(\mu)
    = \lim_{n \to \infty} \frac{\mu[H(\cdot_{\Lambda_n})]}{\vert \Lambda_n\vert},
\end{align*} which in our context is simply given by \(H(\mu) = - \frac{\beta}{2} \sum_{x \sim 0} \mu[\omega_0 \omega_x]\).

\subsection{Main result}\label[section]{subsection:main_result_ising_glauber}

We recall some facts about the translation-invariant reversible (Gibbs) measures \(\mathcal{G}_\theta(H) = \mathcal{R}_\theta\) related to the introduction.

Our \emph{(specific) free energy} functional will be given by
\begin{align}\label{equation:formula_free_energy}
    F(\mu)
    = H(\mu) - s(\mu), \quad \mu \in \mathcal{P}_\theta.
\end{align} It is sometimes called the \emph{Gibbs variational principle}, see e.g. \cite[Theorem 6.82]{friedli_velenik_2017}, that the set of minimizers of \(F\) coincides with the translation-invariant Gibbs measures \(\mathcal{G}_\theta(H)\) and that the minimum is given by \(- P(H) = -\lim_{n \to \infty} \frac{1}{\vert \Lambda_n\vert} \log Z_{\Lambda_n}\) with the partition function \(Z_{\Lambda_n} = \sum_{\omega_{\Lambda_n} \in \Omega_{\Lambda_n}} \e^{-H(\omega_{\Lambda_n})}\). It is also a fact \cite[(15.32)]{Georgii2011} that
\begin{align}\label{equation:decomposition_free_energy}
    F(\mu) 
    = h(\mu) - P(H)
    = h(\mu) + \min_{\nu} F(\nu)
\end{align} and that the relative entropy  
\begin{align*}
    h(\mu) 
    := h(\mu \,\vert\, \nu) \geq 0
\end{align*} does \textit{not} depend on the choice of the Gibbs measure \(\nu \in \mathcal{G}_\theta(H)\).

We also have a de Bruijn-type identity (cf.~\cite[Theorem 1]{handa1996entropy}) for every translation-invariant \(\mu\) which reads
\begin{align}\label{equation:de_bruijn}
        h(\mu S_t)
        = h(\mu) - \int_{0}^{t} \, \sigma(\mu S_s) \, \mathrm{d}s,
\end{align} where the \emph{entropy production per site} is explicitly given by 
\begin{align}
    \sigma(\rho)
    = \int \rho(\mathrm{d}\eta) \, c(0, \eta) \, \log \frac{\mathrm{d}((c(0, \cdot)\rho)}{\mathrm{d}((c(0, \cdot)\rho) \circ \mathrm{flip}_0^{-1})}(\eta), \quad \rho \in P_\theta.
\end{align}
Part of what is called the \emph{dynamical Gibbs variational principle} in \cite[Theorem 6]{jahnel_dynamical_2023} is that \(\sigma\) is non-negative and its zero set on \(\mathcal{P}_\theta\) again coincides with the set of Gibbs measures \(\mathcal{G}_\theta(H)\).

Finally, we formulate that, in our setting, a coarse MLSI holds at any positive temperature \(1/\beta\) and state as corollary the asymptotic completeness of free-energy dissipation, corresponding to the meta-theorem \Cref{theorem:intro_free_energy_dissipation_from_MLSI}.

\begin{theorem}[Coarse MLSI at all positive temperatures]\label[theorem]{theorem:coarse_MLSI}
    For every \(\epsilon > 0\) there is some \(\delta > 0\) such that for all translation-invariant \(\mu\) it holds that
    \begin{align}\label{equation:coarse_MLSI}
        \sigma(\mu)
        \geq \delta \, (h(\mu) - \epsilon).
    \end{align}
\end{theorem}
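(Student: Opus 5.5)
Suppose \Cref{theorem:coarse_MLSI} fails. Then there is an \(\epsilon > 0\) and translation-invariant measures \(\mu_n\) with
\begin{align*}
    \sigma(\mu_n) < \tfrac{1}{n}\,(h(\mu_n) - \epsilon).
\end{align*}
In particular \(h(\mu_n) > \epsilon\) and \(\sigma(\mu_n) \to 0\). The proof will be a compactness argument combined with lower semicontinuity. I plan to derive a contradiction from the three ingredients below.

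\textbf{Step 1: boundedness of \(h\).} The relative entropy density is uniformly bounded on \(\mathcal{P}_\theta\). Indeed, by \eqref{equation:decomposition_free_energy},
\begin{align*}
    h(\mu) = H(\mu) - s(\mu) + P(H),
\end{align*}
and each term is bounded: \(|H(\mu)| \leq \beta d\), \(0 \leq s(\mu) \leq \log 2\), and \(P(H)\) is finite. Hence \(h(\mu_n) \leq C\) for all \(n\), and so \(\sigma(\mu_n) \leq C/n \to 0\).

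\textbf{Step 2: compactness and lower semicontinuity of \(\sigma\).} \(\mathcal{P}_\theta\) is weakly compact, so a subsequence \(\mu_n \to \mu^\ast \in \mathcal{P}_\theta\) weakly. I need \(\sigma\) to be weakly lower semicontinuous on \(\mathcal{P}_\theta\). I would prove this by writing \(\sigma(\rho)\) as a relative entropy (more precisely, a symmetrized relative entropy) between the finite measures
\begin{align*}
    c(0,\cdot)\rho \qquad\text{and}\qquad (c(0,\cdot)\rho)\circ \mathrm{flip}_0^{-1}.
\end{align*}
Both depend continuously on \(\rho\), because \(c(0,\cdot)\) is a local, continuous, strictly positive function. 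Relative entropy is jointly lower semicontinuous via the Donsker--Varadhan variational formula, taken as a supremum over continuous test functions, after correcting for the total masses. Lower semicontinuity then gives \(\sigma(\mu^\ast) \leq \liminf_n \sigma(\mu_n) = 0\). By the dynamical Gibbs variational principle \cite[Theorem 6]{jahnel_dynamical_2023}, \(\mu^\ast \in \mathcal{G}_\theta(H)\).

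\textbf{Step 3: the main obstacle.} So far I only have \(h(\mu^\ast) = 0\) together with \(\liminf_n h(\mu_n) \geq \epsilon\), which is consistent with lower semicontinuity of \(h\). To get a contradiction I must show that small entropy production forces small \(h\) along the sequence. I would use the de Bruijn identity \eqref{equation:de_bruijn} together with continuity of the semigroup, arguing via the flow as follows.

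\emph{(i) Quantitative dissipation of \(h\).} For fixed \(T\), Feller continuity gives \(\mu_n S_t \to \mu^\ast S_t = \mu^\ast\) uniformly in \(t \in [0,T]\). I then need \(\int_0^T \sigma(\mu_n S_s)\,\d s\) to be small. Since \(\sigma\) is not obviously monotone along the flow, I would instead use the local-in-time estimate
\begin{align*}
    h(\mu_n S_T) \geq h(\mu_n) - T\cdot \sup_{s \leq T}\sigma(\mu_n S_s).
\end{align*}

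\emph{(ii) Decay of \(h(\mu S_T)\).} The key input I would try to establish is that \(h(\mu S_T) \to 0\) uniformly as \(\mu\) ranges over a weak neighbourhood of \(\mathcal{G}_\theta(H)\). This amounts to upper semicontinuity of \(\rho \mapsto h(\rho S_T)\) at Gibbs measures for fixed \(T > 0\). The mechanism would be that \(\rho S_T\) is a smoothed measure whose finite-volume conditional densities are controlled: for example, it is a quasilocal Gibbsian-type measure for short times, or one can use the local regularizing effect of the flip rates. The specific entropy \(s(\rho S_T)\) should then behave continuously in \(\rho\) at time \(T > 0\), while \(H\) is continuous. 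This gives \(h(\mu_n S_T) \to h(\mu^\ast) = 0\).

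\emph{(iii) Avoiding monotonicity of \(\sigma\).} I would also need to control \(\sigma(\mu_n S_s)\) for \(s \in [0,T]\), but monotonicity of \(\sigma\) is not available. It can be replaced by choosing a good starting time: since \(h\) is decreasing along the flow, I can pick a time \(t_n\) at which \(\sigma(\mu_n S_{t_n})\) is small and \(h(\mu_n S_{t_n}) \geq \epsilon/2\), and run the argument from there.

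Establishing the regularisation and continuity of \(s(\rho S_T)\) in (ii) is the hard part. I expect to spend most of the effort there, starting from the Markov/FKG-free bound that conditional probabilities of \(\rho S_T\) at the origin are bounded away from zero uniformly.
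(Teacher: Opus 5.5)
\textbf{There is a genuine gap: Step 3 does not close, and the mechanism you propose for (ii) fails.}

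Your Steps 1 and 2 are fine. Indeed $\sigma(\rho)=I(\nu\,\vert\,\nu\circ\mathrm{flip}_0^{-1})$ with $\nu=c(0,\cdot)\rho$, and this is jointly lower semicontinuous. But these steps only tell you that the weak limit $\mu^\ast$ is Gibbs. Since $h$ is merely lower semicontinuous, this says nothing about $\limsup_n h(\mu_n)$. The whole content of the theorem is that small $\sigma$ forces closeness in relative entropy density, not just weak closeness. That information is discarded at this point, and Step 3 does not recover it.

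\emph{The time interval in (i) and (iii).} In (i) you need $\int_0^T\sigma(\mu_nS_s)\,\mathrm{d}s\to0$, but you only know $\sigma(\mu_n)\to0$ at the single time $0$. Choosing a different starting time $t_n$ in (iii) changes nothing, because you would again need $\sigma$ small on all of $[t_n,t_n+T]$. That amounts to monotonicity of $s\mapsto\sigma(\mu S_s)$, i.e.\ convexity of $t\mapsto h(\mu S_t)$, which you rightly say is unavailable. In fact, if (ii) held, the de Bruijn identity would give $\int_0^T\sigma(\mu_nS_s)\,\mathrm{d}s\geq\epsilon-o(1)$. This is perfectly compatible with $\sigma(\mu_n)\to0$, so (i) and (ii) together yield no contradiction.

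\emph{The smoothing mechanism for (ii).} Your mechanism (smoothing by the flips, uniform positivity of single-site conditional probabilities of $\rho S_T$) uses nothing about $\beta>0$, and it already fails for independent flips at $\beta=0$. There $h=\log 2-s$, and the single-site conditional probabilities of $\rho S_T$ lie in $[p_T,1-p_T]$ with $p_T=(1-\e^{-2T})/2$. Let $\rho_k$ be the uniformly shifted average of a $k$-periodic configuration, chosen so that $\rho_k$ converges weakly to the product (Gibbs) measure. Then $\rho_kS_T$ is the law of $X\sim\rho_k$ multiplied spin-wise by i.i.d.\ noise, and $X$ takes at most $k^d$ values. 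Hence the Shannon entropy of $(\rho_kS_T)_{\Lambda_n}$ is at most $d\log k+\abs{\Lambda_n}h_2(p_T)$, with $h_2(p)=-p\log p-(1-p)\log(1-p)$. This gives $h(\rho_kS_T)\geq\log 2-h_2(p_T)>0$ for all $k$. So any true version of (ii) must use $\sigma(\mu_n)\to0$ in an essential way, which brings you back to the original problem.

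\textbf{The missing idea: localize to a box.} The paper extracts entropic information from $\sigma(\mu)$ directly, through the conditional relative entropy $h_{\Lambda_n\vert\Lambda_n^c}(\mu)=\int\mu(\mathrm{d}\eta)\,I(\mu_{\Lambda_n\vert\Lambda_n^c}(\cdot\vert\eta)\,\vert\,\gamma_{\Lambda_n,\eta})$. Two bounds do the work.

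\emph{Upper bound by $\sigma$.} For fixed $n$, the Glauber chain in $\Lambda_n$ with boundary condition $\eta$ is a finite irreducible chain, reversible with respect to $\gamma_{\Lambda_n,\eta}$. It therefore satisfies an MLSI whose constant is uniform in $\eta$, since there are only finitely many effective boundary conditions. By the flip Radon--Nikodym identity for the conditional laws and translation invariance, the $\mu$-average of $\mathcal{J}_{\Lambda_n,\eta}(\mu_{\Lambda_n\vert\Lambda_n^c}(\cdot\vert\eta))$ equals exactly $\abs{\Lambda_n}\sigma(\mu)$. This gives $h_{\Lambda_n\vert\Lambda_n^c}(\mu)/\abs{\Lambda_n}\leq\delta_n^{-1}\sigma(\mu)$.

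\emph{Control of $h$.} Conversely, $h(\mu)\leq h_{\Lambda_n\vert\Lambda_n^c}(\mu)/\abs{\Lambda_n}+\epsilon_n$ uniformly in $\mu$. Split both sides into energy, entropy and pressure terms. The energy and pressure terms converge uniformly. The entropy terms are ordered, $s_{\Lambda_n\vert\Lambda_n^c}(\mu)\leq\abs{\Lambda_n}s(\mu)$, by the chain rule, the fact that conditioning decreases entropy, and $s(\mu)=S_\mu(\omega_0\,\vert\,\omega_{V(0)})$.

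Taking $n$ with $\epsilon_n\leq\epsilon$ and $\delta=\delta_n$ proves the theorem, with no compactness, dynamics or regularization. In your contradiction framework, these two bounds are exactly what turns $\sigma(\mu_n)\to0$ into $\limsup_n h(\mu_n)\leq\epsilon_m$ for every $m$.
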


\noindent 
Together with Meta-theorem \ref{theorem:intro_free_energy_dissipation_from_MLSI} and the de Bruijn-type identity \eqref{equation:de_bruijn} this yields our main result. 

\begin{corollary}[Asymptotically complete free-energy dissipation]
    For every \(\mu \in \mathcal{P}_\theta\) it holds that \(F(\mu S_t) \xrightarrow[t \to \infty]{} \min_{\nu} F(\nu)\).
\end{corollary}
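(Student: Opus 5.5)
The statement to prove is the corollary: for every \(\mu \in \mathcal{P}_\theta\), \(F(\mu S_t) \to \min_\nu F(\nu)\) as \(t \to \infty\). The plan is to check that the hypotheses of the Meta-theorem \Cref{theorem:intro_free_energy_dissipation_from_MLSI} hold in the present setting and then run its contradiction argument.

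\emph{Reduction to the relative entropy density.} By \eqref{equation:decomposition_free_energy}, \(F(\rho) - \min_\nu F(\nu) = h(\rho)\) for every \(\rho \in \mathcal{P}_\theta\). Since \(\mu S_t\) is again translation-invariant, it suffices to show \(f(t) := h(\mu S_t) \to 0\).

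\emph{Trivial case.} If \(h(\mu) = \infty\), the identity \eqref{equation:de_bruijn} is not informative. In the present setting, however, \(h(\mu) = H(\mu) - s(\mu) + P(H)\) is always finite: the spin space is finite, so \(0 \le -s(\mu) \le \log 2\) up to sign conventions, and \(\abs{H(\mu)} \le \beta d\). Hence \(0 \le h(\mu) < \infty\), and we may work with the finite function \(f\).

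\emph{Monotonicity and limit.} By the de Bruijn-type identity \eqref{equation:de_bruijn} and the non-negativity of \(\sigma\) (the dynamical Gibbs variational principle), \(f\) is non-increasing and bounded below by \(0\). Moreover, \(f\) is absolutely continuous with \(f'(s) = -\sigma(\mu S_s)\) for a.e.\ \(s\). Hence \(f(\infty) := \lim_{t\to\infty} f(t) = \inf_t f(t) \ge 0\) exists.

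\emph{Contradiction.} Suppose \(f(\infty) > 0\) and set \(\epsilon := f(\infty)/2\). \Cref{theorem:coarse_MLSI} yields \(\delta > 0\) such that, for all \(s \ge 0\),
\begin{align*}
    \sigma(\mu S_s) \ge \delta\,(f(s) - \epsilon) \ge \delta\,(f(\infty) - \epsilon) = \tfrac{\delta}{2} f(\infty).
\end{align*}
Inserting this into \eqref{equation:de_bruijn} gives \(f(t) \le h(\mu) - \tfrac{\delta}{2} f(\infty)\, t\). The right-hand side tends to \(-\infty\), which contradicts \(f \ge 0\). Therefore \(f(\infty) = 0\), that is, \(F(\mu S_t) \to \min_\nu F(\nu)\).

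\emph{Main obstacle.} With \Cref{theorem:coarse_MLSI} taken as given, the only delicate point is the applicability of \eqref{equation:de_bruijn} for every translation-invariant \(\mu\), including the measurability and integrability of \(s \mapsto \sigma(\mu S_s)\). Both are supplied by \cite[Theorem 1]{handa1996entropy} as quoted in the paper. Everything else is the elementary argument of the Meta-theorem.
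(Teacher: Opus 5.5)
Your proof is correct and is essentially the paper's argument: the corollary follows by plugging the de Bruijn identity \eqref{equation:de_bruijn} and the coarse MLSI of \Cref{theorem:coarse_MLSI} into the contradiction argument of the Meta-theorem, which you reproduce faithfully. Your extra check that $h(\mu)<\infty$ is a harmless addition, though with the paper's convention it is $s(\mu)$, not $-s(\mu)$, that lies in $[0,\log 2]$.
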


\begin{remark}
    As mentioned in \Cref{remark:intro_in_principle_quantitative}, we can, in principle, make this quantitative and we even have \(\sup_{\mu \in \mathcal{P}_\theta} F(\mu S_t) \xrightarrow[t \to \infty]{} \min_{\nu} F(\nu)\), since, in the context of the Ising model we can consider here, \(F\) is bounded on \(\mathcal{P}_\theta\).
    Inspecting the proof in \Cref{subsection:proof_coarse_MLSI}, a quantitative statement would mainly amount to controlling the constants in finite-volume modified log-Sobolev inequalities. We choose not to do that here as we do not believe the resulting bound to be anywhere near optimal anyway.
\end{remark}

\subsection{Proof of the coarse MLSI}\label[section]{subsection:proof_coarse_MLSI}

Consider the \emph{finite-volume conditional relative entropies}, which will be very helpful in connecting finite- and thermodynamical infinite-volume quantities, defined by
\begin{align}
    h_{\Lambda_n \,\vert\, \Lambda_n^c}(\mu )
    := \int \mu(\mathrm{d}\eta) \, I\big(\mu_{\Lambda_n \vert \Lambda_n^c}(\cdot \vert \eta) \,\vert\, \gamma_{\Lambda_n, \eta} \big),
\end{align} where \(\mu_{\Lambda_n \vert \Lambda_n^c}(\cdot \vert \cdot)\)  is the well-defined kernel with
\begin{align*}
    \int\mu(\d \eta) \, f(\eta) = \int \mu(\d\eta) \int \mu_{\Lambda_n \vert \Lambda_n^c}(\d \omega_{\Lambda_n} \vert \eta) \,f(\omega_{\Lambda_n} \eta_{\Lambda_n^c}), \quad f \geq 0\text{ measurable},
\end{align*} and \(\gamma_{\Lambda_n, \eta}\) is the finite-volume Gibbs measure in \(\Lambda_n\) with boundary condition \(\eta_{\Lambda_n^c}\). The latter is the probability measure on \(\Omega_{\Lambda_n}\) characterized by its density with respect to counting measure \(\mathrm{d}\omega_{\Lambda_n}\) on \(\Omega_{\Lambda_n}\) which is
\begin{align*}
    \frac{\d \gamma_{\Lambda_n, \eta}}{\d\omega_{\Lambda_n}}(\omega_{\Lambda_n})
    = Z_{\Lambda_n, \eta}^{-1} \,\e^{-H_{\Lambda, \eta}(\omega_{\Lambda_n})}
\end{align*} and \(H_{\Lambda, \eta}(\omega_{\Lambda_n})\) is the conditional energy of \(\omega_{\Lambda_n}\) in \(\Lambda_n\) with boundary condition \(\eta_{\Lambda_n^c}\), concretely \(H_{\Lambda_n, \eta}(\omega_{\Lambda_n}) = -\frac{\beta}{2}\sum_{\substack{x,y  \in \Lambda_n, \\ x \sim y}} \omega_x \omega_y - \beta\sum_{\substack{x  \in \Lambda_n, y \in \Lambda_n^c \\ x \sim y}} \omega_x \eta_y\).

Of course, we are not the first ones to consider these conditional relative entropies in the context of statistical mechanics models on the lattice, see e.g. the beautiful article \cite{Follmer1977Inner}, using this notion to give an ''inner'' variational principle for Markov fields.

Now, the proof of Theorem~\ref{theorem:coarse_MLSI} is evenly split into the following two lemmas, relating the finite-volume conditional relative entropies \(\frac{h_{\Lambda_n \vert \Lambda_n^c}(\mu)}{\vert \Lambda_n\vert}\) up to error terms to the infinite-volume quantities \(h(\mu), \sigma(\mu)\).

\begin{lemma}[Specific relative entropy is uniformly controlled by finite-volume conditional relative entropies]\label[lemma]{lemma:specific_entropy_controlled_by_finite_volume_conditional_entropies}
    There is a sequence \(\epsilon_n \xrightarrow[n \to \infty]{} 0\) such that for all \(n \in \mathbb{N}\) and all \(\mu \in \mathcal{P_\theta}\) it holds
    \begin{align}\label{equation:specific_entropy_controlled_by_finite_volume_conditional_entropie}
        h(\mu)
        \,\leq\, \frac{h_{\Lambda_n \vert \Lambda_n^c}(\mu)}{\vert \Lambda_n\vert} + \epsilon_n.
    \end{align}
\end{lemma}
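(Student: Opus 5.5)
The plan is a block decomposition. I would tile a large cube \(\Lambda_N\) by disjoint translates of \(\Lambda_n\), compare \(\mu_{\Lambda_N}\) with a product of finite-volume Gibbs measures on the blocks, and then let \(N \to \infty\). All error terms will be boundary terms of order \(\beta\,|\partial \Lambda_n|\) per block, which gives \(\epsilon_n = C(\beta,d)\,|\partial\Lambda_n|/|\Lambda_n| \to 0\), uniformly in \(\mu\).

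\emph{Step 1 (tiling and reference measure).} Take \(N\) such that \(\Lambda_N\) is the disjoint union of \(k^d\) translates \(B_1,\dots,B_{k^d}\) of \(\Lambda_n\), for instance \(2N+1 = k(2n+1)\). Fix a boundary condition \(\xi \in \Omega\), say all plus. The two reference measures on \(\Omega_{\Lambda_N}\) are
\[
\pi := \bigotimes_i \gamma_{B_i,\xi}
\qquad\text{and}\qquad
\nu_{\Lambda_N} = \int \nu(\d\eta)\,\gamma_{\Lambda_N,\eta},
\]
where \(\nu \in \mathcal{G}_\theta(H)\) is arbitrary. The Hamiltonians \(H_{\Lambda_N,\eta}\) and \(\sum_i H_{B_i,\xi}\) differ only by interactions across block boundaries and across \(\partial\Lambda_N\). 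Each such bond contributes at most \(2\beta\) to the Hamiltonian, and the partition functions shift by at most the same sup norm. Hence \(\log \frac{\d \nu_{\Lambda_N}}{\d\pi}\) is bounded in sup norm by \(C k^d |\partial\Lambda_n|\), uniformly in \(\eta\). It follows that
\[
I_{\Lambda_N}(\mu\,\vert\,\nu) \le I(\mu_{\Lambda_N}\,\vert\,\pi) + C k^d|\partial \Lambda_n|.
\]

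\emph{Step 2 (chain rule and monotonicity under conditioning).} Order the blocks and apply the chain rule for relative entropy with respect to the product measure \(\pi\):
\[
I(\mu_{\Lambda_N}\,\vert\,\pi) = \sum_i \int \mu(\d\eta)\, I\big(\mu_{B_i \vert B_1\cup\dots\cup B_{i-1}}(\cdot\vert\eta)\,\big\vert\, \gamma_{B_i,\xi}\big).
\]
For a fixed reference measure, the averaged conditional relative entropy \(\int \mu(\d\eta)\, I(\mu_{B\vert\mathcal{G}}(\cdot\vert\eta)\,\vert\,\pi_B)\) is nondecreasing in the conditioning \(\sigma\)-algebra \(\mathcal{G}\). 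This follows from joint convexity of \(I\), or from the chain rule again. I may therefore replace the conditioning on the earlier blocks by conditioning on all of \(B_i^c\).

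I then swap \(\gamma_{B_i,\xi}\) for \(\gamma_{B_i,\eta}\). Their log-densities differ by at most \(C|\partial\Lambda_n|\), so each summand is at most \(h_{B_i\vert B_i^c}(\mu) + C|\partial \Lambda_n|\). By translation invariance of \(\mu\), and covariance of the Gibbs specification, \(h_{B_i\vert B_i^c}(\mu) = h_{\Lambda_n\vert\Lambda_n^c}(\mu)\).

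\emph{Step 3 (conclusion).} Combining Steps 1 and 2 and dividing by \(|\Lambda_N| = k^d|\Lambda_n|\) gives
\[
\frac{I_{\Lambda_N}(\mu\,\vert\,\nu)}{|\Lambda_N|} \le \frac{h_{\Lambda_n\vert\Lambda_n^c}(\mu)}{|\Lambda_n|} + 2C\,\frac{|\partial\Lambda_n|}{|\Lambda_n|}.
\]
Letting \(k\to\infty\) along this subsequence of cubes, the left-hand side tends to \(h(\mu)\), since the limit defining \(h\) exists. This proves the claim with \(\epsilon_n = 2C|\partial\Lambda_n|/|\Lambda_n|\).

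The only delicate point I expect is the bookkeeping in Step 1 and in the swap in Step 2. There I must check that the sup-norm bounds on the log-density ratios really are uniform in \(\eta\) and \(\xi\). The key facts are that changing the boundary condition on one bond changes the conditional energy by at most \(2\beta\), and that the partition-function ratio is controlled by the same sup norm. Both follow from the finite range and boundedness of the Ising interaction.
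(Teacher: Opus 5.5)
Your argument is correct, but it follows a different route from the paper's.

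\emph{The paper's route.} It splits both sides into energy, entropy and pressure parts, using \(h(\mu) = H(\mu) - s(\mu) + P(H)\) from the Gibbs variational principle and the analogous decomposition of \(h_{\Lambda_n\vert\Lambda_n^c}(\mu)\). It imports the uniform-in-\(\mu\) convergence of the energy and pressure parts from the proof of Georgii's Lemma 15.28. It then compares the entropy parts exactly, \(s_{\Lambda_n\vert\Lambda_n^c}(\mu) \le \vert\Lambda_n\vert\, s(\mu)\), via a site-by-site chain rule and the lexicographic-past representation of \(s(\mu)\).

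\emph{Your route.} You never separate these three parts, and you need neither the variational-principle identity nor the lexicographic-past formula. You stay with relative entropies against a single fixed product reference measure on a tiling of \(\Lambda_N\). You apply the chain rule block by block and pay for every change of reference measure with a sup-norm bound on a log-density ratio, which is just a count of boundary bonds. The order of steps in your Step 2 matters, and you have it right. Monotonicity in the conditioning \(\sigma\)-algebra needs a reference measure that does not depend on \(\eta\), so it must come before the swap \(\gamma_{B_i,\xi} \to \gamma_{B_i,\eta}\). One small correction: an inter-block bond appears once in \(H_{\Lambda_N,\eta}\) and twice in \(\sum_i H_{B_i,\xi}\), so it can shift the Hamiltonian by up to \(3\beta\) rather than \(2\beta\). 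This only changes \(C\).

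\emph{What each buys.} Your version is self-contained: the only outside input is the existence of the limit defining \(h(\mu)\) along cubes. It also gives an explicit rate \(\epsilon_n = O(\beta d/n)\), uniform in \(\mu\). The paper's version is shorter given Georgii's machinery. It also makes clear that the entropy term is the only place where a one-sided comparison is needed, while the other two terms simply converge. Its rate, however, stays implicit in the convergence of the pressure.
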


\begin{lemma}[Finite-volume conditional relative entropies are (in general non-uniformly) controlled by entropy-production per site]\label[lemma]{lemma:MLSI_type_bound_for_finite_volume_conditional_entropies}
    For every \(n \in \mathbb{N}\) there is a \(\delta_n > 0\) such that for all \(\mu \in \mathcal{P}_\theta\) it holds 
    \begin{align}\label{equation:MLSI_type_bound_for_finite_volume_conditional_entropie}
        \frac{h_{\Lambda_n \vert \Lambda_n^c}(\mu)}{\vert \Lambda_n\vert}
        \,\leq\, \delta_n^{-1} \,\sigma(\mu).
    \end{align}
\end{lemma}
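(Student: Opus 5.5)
The plan is to reduce \eqref{equation:MLSI_type_bound_for_finite_volume_conditional_entropie} to a classical modified log-Sobolev inequality for a \emph{finite} Markov chain, applied boundary condition by boundary condition. Then we integrate over the boundary condition and identify the integrated finite-volume entropy production with \(\vert\Lambda_n\vert\,\sigma(\mu)\).

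\emph{Step 1: finite-volume MLSI uniform in the boundary condition.} Fix \(n\) and \(\eta \in \Omega\). Consider the continuous-time Markov chain on the finite set \(\Omega_{\Lambda_n}\) with generator
\[
\mathscr{L}_{n,\eta} f(\omega_{\Lambda_n}) = \sum_{x\in\Lambda_n} c(x,\omega_{\Lambda_n}\eta_{\Lambda_n^c})\,\bigl[f(\omega^x_{\Lambda_n}) - f(\omega_{\Lambda_n})\bigr].
\]
It is irreducible and satisfies detailed balance with respect to \(\gamma_{\Lambda_n,\eta}\), i.e.
\[
\gamma_{\Lambda_n,\eta}(\omega)\,c(x,\omega\eta) = \gamma_{\Lambda_n,\eta}(\omega^x)\,c(x,\omega^x\eta).
\]
On a finite irreducible reversible chain an MLSI holds with some constant \(\kappa_{n,\eta}>0\). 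The chain depends on \(\eta\) only through \(\eta\) on the outer boundary of \(\Lambda_n\), which takes finitely many values. Hence \(\kappa_n := \min_\eta \kappa_{n,\eta} > 0\). (Alternatively, use that all rates lie in \([\e^{-2d\beta},\e^{2d\beta}]\) and all \(\gamma_{\Lambda_n,\eta}\)-weights are comparable uniformly in \(\eta\).)

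Using detailed balance, the Dirichlet form \(\mathcal{E}_{n,\eta}(f,\log f)\) with \(f = \d\nu/\d\gamma_{\Lambda_n,\eta}\) can be rewritten as
\[
\sum_{x\in\Lambda_n}\sum_{\omega}\nu(\omega)\,c(x,\omega\eta)\log\frac{\nu(\omega)\,c(x,\omega\eta)}{\nu(\omega^x)\,c(x,\omega^x\eta)} .
\]
The MLSI then reads \(\kappa_n\, I(\nu\,\vert\,\gamma_{\Lambda_n,\eta}) \le\) this quantity, for every probability \(\nu\) on \(\Omega_{\Lambda_n}\). We allow the value \(+\infty\) on the right, with the usual conventions \(0\log 0=0\) and \(a\log(a/0)=\infty\) for \(a>0\).

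\emph{Step 2: integrate and identify with \(\sigma\).} Apply Step 1 to \(\nu = \mu_{\Lambda_n\vert\Lambda_n^c}(\cdot\vert\eta)\) and integrate in \(\mu(\d\eta)\). The left side becomes \(\kappa_n h_{\Lambda_n\vert\Lambda_n^c}(\mu)\). For the right side the key claim is the following: for \(x\in\Lambda_n\), the Radon--Nikodym derivative \(\d(c(x,\cdot)\mu)/\d((c(x,\cdot)\mu)\circ\mathrm{flip}_x^{-1})\) is \(\mu\)-a.s. given by
\[
\frac{\mu_{\Lambda_n\vert\Lambda_n^c}(\omega_{\Lambda_n}\vert\omega)\,c(x,\omega)}{\mu_{\Lambda_n\vert\Lambda_n^c}(\omega^x_{\Lambda_n}\vert\omega)\,c(x,\omega^x)} .
\]
This holds because \(\mathrm{flip}_x\) does not touch \(\Lambda_n^c\), so both measures disintegrate over \(\eta_{\Lambda_n^c}\) with the same marginal. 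Hence the integrated site-\(x\) term equals \(\sigma_x(\mu)\), the entropy production at site \(x\). By translation invariance \(\sigma_x(\mu)=\sigma(\mu)\), so
\[
\kappa_n\, h_{\Lambda_n\vert\Lambda_n^c}(\mu) \le \vert\Lambda_n\vert\,\sigma(\mu).
\]
This gives the lemma with \(\delta_n=\kappa_n\).

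The main obstacle I expect is the identification in Step 2, namely checking the disintegration of the Radon--Nikodym derivative carefully. This requires choosing a regular version of the conditional kernel and handling null sets and possibly infinite values consistently, including the case where \(\mu\) is not absolutely continuous with respect to its flip, in which case both sides should be \(+\infty\). The uniformity of the finite-volume MLSI in \(\eta\) is, by contrast, routine, since only finitely many boundary conditions matter.
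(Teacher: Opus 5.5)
Your proposal is correct and follows essentially the same route as the paper. It applies a finite-volume MLSI for the reversible chain with generator \(\mathscr{L}_{\Lambda_n,\eta}\), uniform in \(\eta\) because only finitely many boundary conditions matter, then integrates against \(\mu\) and identifies the result term by term with \(\vert\Lambda_n\vert\,\sigma(\mu)\) via translation invariance; the only cosmetic difference is that you identify \(\d(c(x,\cdot)\mu)/\d((c(x,\cdot)\mu)\circ\mathrm{flip}_x^{-1})\) directly by disintegrating over \(\eta_{\Lambda_n^c}\), whereas the paper computes the conditional kernel's flip density in a separate lemma and takes the rate ratio from reversibility of \(\gamma_{\Lambda_n,\eta}\).
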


With these estimates at hand, the proof of the coarse modified log-Sobolev inequality is quite simple. 

\begin{proof}[Proof of Theorem \ref{theorem:coarse_MLSI}]
    Fix $\varepsilon>0$. By combining Lemma \ref{lemma:specific_entropy_controlled_by_finite_volume_conditional_entropies} and Lemma \ref{lemma:MLSI_type_bound_for_finite_volume_conditional_entropies} we see that for each $n \ \in \N$ we have for every $\mu \in \mathcal{P}_\theta$
    \begin{align}
        \sigma(\mu) \geq \delta_n \frac{h_{\Lambda_n \lvert \Lambda_n^c}(\mu)}{\abs{\Lambda_n}} \geq \delta_n(h(\mu) - \varepsilon_n). 
    \end{align}
    Because the sequence $(\varepsilon_n)_{n \in \N}$ vanishes as $n$ tends to infinity by Lemma \ref{lemma:specific_entropy_controlled_by_finite_volume_conditional_entropies}, we can conclude by choosing $n$ sufficiently large such that $\abs{\varepsilon_n}\leq \varepsilon$ and choose $\delta = \delta(\varepsilon) := \delta_n$. This yields the claimed inequality. 
\end{proof}

We finally present the proofs of the two lemmas.

\begin{proof}[Proof of Lemma~\ref{lemma:specific_entropy_controlled_by_finite_volume_conditional_entropies}]
    To effectively compare the relative entropy density $h(\mu)$ and the conditional relative entropy \(h_{\Lambda_n \vert \Lambda_n^c}(\mu) \,/\, \vert\Lambda_n\vert\), we decompose both into terms of energy-, entropy-, and pressure-type.
    We will then see that the energy- and pressure-terms in the decomposition of \(h_{\Lambda_n \vert \Lambda_n^c}(\mu) \,/\, \vert\Lambda_n\vert\) converge to those of \(h(\mu)\) as \(n \to \infty\), uniformly in \(\mu\), corresponding to the errors \(\epsilon_n\).
    The entropy-terms on the other hand will turn out to be ordered in the right direction. \medskip 

    \noindent 
    Now, first recall from \eqref{equation:decomposition_free_energy} the formula 
    \begin{align*}
        h(\mu)
        = F(\mu) + P(H)
        = H(\mu) - s(\mu) + P(H).
    \end{align*}
    Analogously, we can decompose
    \begin{align*}
        &h_{\Lambda_n \,\vert\, \Lambda_n^c}(\mu)
        = \int \mu(\mathrm{d}\eta) \, \int \mu_{\Lambda_n \vert \Lambda_n^c}(\mathrm{d}\omega_{\Lambda_n} \vert \eta_{\Lambda_n^c}) \log \frac{\mathrm{d}\mu_{\Lambda_n \vert \Lambda_n^c}(\cdot\vert \eta_{\Lambda_n^c})} {Z_{\Lambda_n, \eta_{\Lambda_n^c}}^{-1} \mathrm{e}^{-H_{\Lambda_n, \eta_{\Lambda_n^c}}(\cdot)} \mathrm{d}\omega_{\Lambda_n}}(\omega_{\Lambda_n}) \\
        &= \Big(\int \mu(\mathrm{d}\eta) \int \mu_{\Lambda_n \vert \Lambda_n^c}(\mathrm{d}\omega_{\Lambda_n} \vert \eta_{\Lambda_n^c}) \, H_{\Lambda_n, \eta_{\Lambda_n^c}}(\omega_{\Lambda_n}) \Big)
        - s_{\Lambda_n \vert \Lambda_n^c}(\mu)
        + \mu[\log Z_{\Lambda_n, \cdot}],
    \end{align*} where
    \begin{align*}
        -s_{\Lambda_n \vert \Lambda_n^c}(\mu)
        := \int \mu(\mathrm{d}\eta) \, I(\mu_{\Lambda_n \vert \Lambda_n^c}(\cdot \vert \eta) \,\vert\, \mathrm{d}\omega_{\Lambda_n})
    \end{align*} is the conditional entropy of the spins inside \(\Lambda_n\) conditioned on the outside \(\Lambda_n^c\).

    The convergences, uniform in \(\mu\),
    \begin{equation}\label{proof:uniform-convergences}
        \begin{split}
        \vert\Lambda_n\vert^{-1} \,\Big(\int \mu(\mathrm{d}\eta) \int \mu_{\Lambda_n \vert \Lambda_n^c}(\mathrm{d}\omega_{\Lambda_n} \vert \eta_{\Lambda_n^c}) \, H_{\Lambda_n, \eta_{\Lambda_n^c}}(\omega_{\Lambda_n}) \Big)
        &\xrightarrow[n \to \infty]{} H(\mu), \\
        \vert\Lambda_n\vert^{-1} \, \mu[\log Z_{\Lambda_n, \cdot}]
        &\xrightarrow[n \to \infty]{} P(H),
        \end{split}
    \end{equation} are consequences of the proof of \cite[Lemma 15.28]{Georgii2011}.
    
    Let us now see that the entropy terms are ordered as \(s_{\Lambda_n \vert \Lambda_n^c}(\mu) \leq \vert \Lambda_n\vert s(\mu)\) to conclude.
    After \cite[Proposition 15.16]{Georgii2011}, we can present the entropy \(s(\mu)\) as conditional entropy of the spin at the origin conditioned on its lexicographic past \(V(0)\), where \(V(x) = \{y \in \Z^d \,\colon\, y \prec x\}\), i.e.,
    \begin{align*}
        s(\mu)
        = S_\mu(\omega_0 \,\vert\, \omega_{V(0)}),
    \end{align*} where we denote for readability in the coming computation
    \begin{align*}
        S_\mu(\omega_A \,\vert\, \omega_B)
        = -\int \mu(\mathrm{d} \eta) \, I(\mu_{A \vert B}(\cdot_A \vert \eta_B) \,\vert\, \mathrm{d}\omega_A).
    \end{align*}

    Indeed, by the chain rule for conditional entropies, and because entropies decrease under conditioning, and by translation-invariance we finally obtain
    \begin{align*}
        &s_{\Lambda_n \vert \Lambda_n^c}(\mu)
        = S_\mu(\omega_{\Lambda_n} \,\vert\, \omega_{\Lambda_n^c})
        = \sum_{x \in \Lambda_n} S_\mu(\omega_{x} \,\vert\, \omega_{\Lambda_n^c \cup \{y \in \Lambda_n \colon y \prec x\}}) \\
        &\leq \sum_{x \in \Lambda_n} S_\mu(\omega_{x} \,\vert\, \omega_{V(x)}) 
        = \vert\Lambda_n\vert 
        S_\mu(\omega_0 \,\vert\, \omega_{V(0)})
        =  \vert\Lambda_n\vert s(\mu).
    \end{align*}
    Combining this with \eqref{proof:uniform-convergences} yields the claim. 
\end{proof}

\begin{proof}[Proof of Lemma~\ref{lemma:MLSI_type_bound_for_finite_volume_conditional_entropies}]
    The finite-volume Gibbs measure \(\gamma_{\Lambda_n, \eta}\) is the reversible distribution of the irreducible continuous-time Markov chain with the finite state space \(\Omega_{\Lambda_n}\) and generator
    \begin{align*}
        (\mathscr{L}_{\Lambda_n, \eta} f)(\omega_{\Lambda_n})
        = \sum_{x \in \Lambda_n} c(x, \omega_{\Lambda_n}\eta_{\Lambda_n^c}) [f(\omega_{\Lambda_n}^x) - f(\omega_{\Lambda_n})].
    \end{align*}
    As such, it automatically obeys an MLSI which reads
    \begin{align}
        I(\rho \,\vert\, \gamma_{\Lambda_n, \eta})
        \leq \kappa_{\Lambda_n, \eta} \, \mathcal{J}_{\Lambda_n, \eta}(\rho)
    \end{align} with the Dirichlet-form expression or Fisher information
    \begin{align}
        &\mathcal{J}_{\Lambda_n, \eta}(\rho)
        := \rho\Big[(-\mathscr{L}_{\Lambda_n, \eta}) \log\tfrac{\mathrm{d}\rho}{\mathrm{d}\gamma_{\Lambda_n, \eta}} \Big].
    \end{align}
    The constant \(\delta_{n}^{-1} := \sup_{\eta} \kappa_{\Lambda_n, \eta}\) is obviously still finite as there are effectively only finitely many possible boundary conditions due to the finite-range nature of the rates \(c\).
    It follows that
    \begin{align}\label{equation:MLSI_bound_of_conditional_relative_entropy}
        &h_{\Lambda_n \vert \Lambda_n^c}(\mu)
        = \int \mu(\mathrm{d}\eta) \, I\big(\mu_{\Lambda_n \vert \Lambda_n^c}(\cdot \vert \eta) \,\vert\, \gamma_{\Lambda_n, \eta} \big)
        \leq \delta_n^{-1} \int \mu(\mathrm{d}\eta)  \, \mathcal{J}_{\Lambda_n, \eta}(\mu_{\Lambda_n \vert \Lambda_n^c}(\cdot \vert\eta) ).
    \end{align}
    We can without loss of generality assume \(\sigma(\mu) < \infty\), because \eqref{equation:MLSI_type_bound_for_finite_volume_conditional_entropie} is trivial otherwise. Then, we get from \Cref{lemma:computation_flip_density} that for \(\mu\)-a.e. \(\eta\) it holds
    \begin{align*}
        \frac{\d \mu_{\Lambda_n \vert \Lambda_n^c}(\cdot \vert \eta) }{\d(\mu_{\Lambda_n \vert \Lambda_n^c}(\cdot \vert \eta) \circ \mathrm{flip}_x^{-1})}(\eta_{\Lambda_n})
        = \frac{\mathrm{d}\mu}{\mathrm{d}(\mu \circ \mathrm{flip}_0^{-1})}(\theta_x\eta).
    \end{align*} We further see that
    \begin{align*}
        \frac{\gamma_{\Lambda_n, \eta}(\eta_{\Lambda_n}^x)}{\gamma_{\Lambda_n, \eta}(\eta_{\Lambda_n})}
        = \frac{c(x, \eta)}{c(x, \eta^x)}
    \end{align*} by reversibility.
    Hence, for the r.h.s. of \eqref{equation:MLSI_bound_of_conditional_relative_entropy} it holds that
    \begin{align*}
        &\int \mu(\mathrm{d}\eta)  \, \mathcal{J}_{\Lambda_n, \eta}(\mu_{\Lambda_n \vert \Lambda_n^c}(\cdot \vert\eta) ) \\
        &= \int \mu(\mathrm{d}\eta)  \, \int \mu_{\Lambda_n \vert \Lambda_n^c}(\mathrm{d}\omega_{\Lambda_n} \vert\eta) \, \Big(-\mathscr{L}_{\Lambda_n, \eta} \log\tfrac{\mathrm{d} \mu_{\Lambda_n \vert \Lambda_n^c}(\cdot \vert\eta)}{\mathrm{d}\gamma_{\Lambda_n, \eta}} \Big)(\omega_{\Lambda_n}) \\
        &= \int \mu(\mathrm{d}\eta) \, \Big(-\mathscr{L}_{\Lambda_n, \eta} \log\tfrac{\mathrm{d} \mu_{\Lambda_n \vert \Lambda_n^c}(\cdot \vert\eta)}{\mathrm{d}\gamma_{\Lambda_n, \eta}} \Big)(\eta_{\Lambda_n})  \\
        &= \sum_{x \in \Lambda_n} \int \mu(\mathrm{d}\eta) \, c(x, \eta) \Big[ \log\tfrac{\mathrm{d} \mu_{\Lambda_n \vert \Lambda_n^c}(\cdot \vert\eta)}{\mathrm{d}\gamma_{\Lambda_n, \eta}}(\eta_{\Lambda_n}) -  \log\tfrac{\mathrm{d} (\mu_{\Lambda_n \vert \Lambda_n^c}(\cdot \vert\eta) \circ \mathrm{flip}_x^{-1})}{\mathrm{d}(\gamma_{\Lambda_n, \eta} \circ \mathrm{flip}_x^{-1})}(\eta_{\Lambda_n}) \Big] \\
        &= \sum_{x \in \Lambda_n} \int \mu(\mathrm{d}\eta) \, c(x, \eta)  \log \tfrac{\mathrm{d} (c(0, \cdot)\mu)}{\mathrm{d}((c(0, \cdot) \mu) \circ \mathrm{flip}_0^{-1})}(\theta_x\eta) 
        = \vert \Lambda_n\vert \,\sigma(\mu).
    \end{align*} 
\end{proof}

\begin{lemma}\label[lemma]{lemma:computation_flip_density}
    Suppose \(\sigma(\mu) < \infty\), or even just \(\mu \ll \mu \circ \mathrm{flip}_0^{-1}\) (or, equivalently, \(\mu \circ \mathrm{flip}_0^{-1} \ll \mu\)).
    Then, for \(\mu\)-a.e. \(\eta\) and all \(x \in \Lambda_n\), we have 
    \begin{align*}
        \frac{\d \mu_{\Lambda_n \vert \Lambda_n^c}(\cdot \vert \eta) }{\d (\mu_{\Lambda_n \vert \Lambda_n^c}(\cdot \vert \eta) \circ \mathrm{flip}_x^{-1})}(\eta_{\Lambda_n})
        = \frac{\mathrm{d}\mu}{\mathrm{d}(\mu \circ \mathrm{flip}_0^{-1})}(\theta_{x}\eta),
    \end{align*} where \(\theta_x\) means translation by \(x\), i.e., \((\theta_x \eta)_y = \eta_{y+x}\) for all \(y \in \Z^d\).
\end{lemma}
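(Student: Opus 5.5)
The approach is to identify both sides as Radon–Nikodym derivatives of measures on \(\Omega\) with respect to their flipped versions, and then to use translation invariance to move from site \(x\) to the origin. First I would reduce to the origin. Since \(\mu\) is translation-invariant and \(\mathrm{flip}_x = \theta_x^{-1}\circ\mathrm{flip}_0\circ\theta_x\) (with the convention \((\theta_x\eta)_y=\eta_{y+x}\)), the relation \(\mu\ll\mu\circ\mathrm{flip}_0^{-1}\) transfers to \(\mu\ll\mu\circ\mathrm{flip}_x^{-1}\). Moreover,
\begin{align*}
\frac{\d\mu}{\d(\mu\circ\mathrm{flip}_x^{-1})}(\eta)=\frac{\d\mu}{\d(\mu\circ\mathrm{flip}_0^{-1})}(\theta_x\eta)\quad \mu\text{-a.s.}
\end{align*}
I would check this by testing against bounded measurable \(f\) and using \(\mu\circ\theta_x^{-1}=\mu\). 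In the same way, \(\sigma(\mu)<\infty\) forces \(\mu\ll\mu\circ\mathrm{flip}_0^{-1}\), because otherwise the logarithm equals \(+\infty\) on a set of positive \(c(0,\cdot)\mu\)-mass, and \(c(0,\cdot)\) is bounded below. Since \(\mathrm{flip}_0\) is an involution, the two absolute-continuity conditions are equivalent. It therefore remains to show, for \(x\in\Lambda_n\),
\begin{align*}
\frac{\d\mu}{\d(\mu\circ\mathrm{flip}_x^{-1})}(\eta)=\frac{\mu_{\Lambda_n\vert\Lambda_n^c}(\eta_{\Lambda_n}\vert\eta)}{\mu_{\Lambda_n\vert\Lambda_n^c}(\eta^x_{\Lambda_n}\vert\eta)}\quad\mu\text{-a.s.}
\end{align*}

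To prove this, I would disintegrate \(\mu\) along \(\mathcal F_{\Lambda_n^c}\). Write \(\mu(\d\eta)=\mu_{\Lambda_n^c}(\d\eta_{\Lambda_n^c})\,\mu_{\Lambda_n\vert\Lambda_n^c}(\omega_{\Lambda_n}\vert\eta)\), where the second factor is a probability on the finite set \(\Omega_{\Lambda_n}\). The map \(\mathrm{flip}_x\) with \(x\in\Lambda_n\) leaves \(\eta_{\Lambda_n^c}\) unchanged and acts only on the finite coordinate. Hence \(\mu\circ\mathrm{flip}_x^{-1}\) has the same \(\Lambda_n^c\)-marginal, with conditional kernel \(\mu_{\Lambda_n\vert\Lambda_n^c}(\cdot\vert\eta)\circ\mathrm{flip}_x^{-1}\). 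For a bounded test function \(f\), I would compute \(\int f\,\d\mu\) by summing over \(\omega_{\Lambda_n}\) and multiplying and dividing by the flipped kernel weights. This shows that the ratio of kernel weights is a version of the density on the set where the flipped weight is positive.

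The main obstacle is handling null sets, namely boundary conditions \(\eta\) for which the flipped kernel weight \(\mu_{\Lambda_n\vert\Lambda_n^c}(\eta^x_{\Lambda_n}\vert\eta)\) vanishes while the unflipped one does not. I would argue that the absolute continuity \(\mu\ll\mu\circ\mathrm{flip}_x^{-1}\) forces the set of such \((\eta_{\Lambda_n^c},\omega_{\Lambda_n})\) to be \(\mu\)-null. On that set \(\mu\) has positive kernel mass while \(\mu\circ\mathrm{flip}_x^{-1}\) has zero mass, so the set must be negligible. Consequently, for \(\mu\)-a.e. \(\eta\) the evaluation at the realized configuration \(\eta_{\Lambda_n}\) is well defined and positive. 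Finally, I would note that it suffices to work with a fixed regular version of the kernel, which exists because \(\Omega_{\Lambda_n}\) is finite, and that uniqueness of Radon–Nikodym derivatives up to \(\mu\)-null sets yields the a.e. identity simultaneously for the finitely many \(x\in\Lambda_n\).
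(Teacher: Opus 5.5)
Your proposal is correct and follows essentially the paper's argument. Both disintegrate \(\mu\) along \(\mathcal{F}_{\Lambda_n^c}\), use that \(\mathrm{flip}_x\) for \(x\in\Lambda_n\) leaves the outside configuration unchanged (so \(\mu\circ\mathrm{flip}_x^{-1}\) has the flipped kernel), and pass to the origin via translation invariance and \(\mathrm{flip}_x=\theta_{-x}\circ\mathrm{flip}_0\circ\theta_x\).

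The differences are minor. You check that the ratio of kernel weights is a version of \(\frac{\d\mu}{\d(\mu\circ\mathrm{flip}_x^{-1})}\). The paper instead checks, with product test functions \(\varphi(\eta_{\Lambda^c})g(\omega_\Lambda)\), that the global density restricted to fibres is a density of the conditional kernel, which gives the identity at every \(\omega_\Lambda\), not only at \(\eta_{\Lambda_n}\). You also spell out the null-set issue and the implication \(\sigma(\mu)<\infty\Rightarrow\mu\ll\mu\circ\mathrm{flip}_0^{-1}\), which the paper leaves implicit.
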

\begin{proof}
    It is more generally true that for \(\mu\)-a.e. \(\eta\), all \(x \in \Lambda\) and all \(\omega_\Lambda \in \Omega_{\Lambda}\), we have 
    \begin{align}\label{equation:radon_nikodym}
        \frac{\d \mu_{\Lambda \vert \Lambda^c}(\cdot \vert \eta) }{\d( \mu_{\Lambda \vert \Lambda^c}(\cdot \vert \eta) \circ \mathrm{flip}_x^{-1})}(\omega_{\Lambda})
        = \frac{\mathrm{d}\mu}{\mathrm{d}(\mu \circ \mathrm{flip}_x^{-1})}(\omega_{\Lambda}\eta_{\Lambda^c})
        = \frac{\mathrm{d}\mu}{\mathrm{d}(\mu \circ \mathrm{flip}_0^{-1})}(\theta_{x}(\omega_{\Lambda}\eta_{\Lambda^c})).
    \end{align}
    To see this, fix \(\mathcal{F}_{\Lambda}\)-measurable \(g\) and \(\mathcal{F}_{\Lambda^c}\)-measurable \(\varphi\), both non-negative, and compute
    \begin{align*}
        &\int \mu(\d \eta) \, \varphi(\eta_{\Lambda^c}) \Big[ \int (\mu_{\Lambda \vert \Lambda^c}(\cdot\vert \eta) \circ \mathrm{flip}_x^{-1})(\d\omega_\Lambda) \, g(\omega_\Lambda)  \tfrac{\mathrm{d}\mu}{\mathrm{d}(\mu \circ \mathrm{flip}_x^{-1})}(\omega_{\Lambda}\eta_{\Lambda^c})\Big] \\
        &= \int \mu(\d \eta) \Big[ \int \mu_{\Lambda \vert \Lambda^c}(\d\omega_\Lambda \vert \eta) \, (g \circ \mathrm{flip}_x)(\omega_\Lambda) (\varphi \circ \mathrm{flip}_x)(\eta_{\Lambda^c})  (\tfrac{\mathrm{d}\mu}{\mathrm{d}(\mu \circ \mathrm{flip}_x^{-1})} \circ  \mathrm{flip}_x)(\omega_{\Lambda}\eta_{\Lambda^c}) \Big] \\
        &= \int \mu(\d \eta) \, (g \circ \mathrm{flip}_x)(\eta_\Lambda) (\varphi \circ \mathrm{flip}_x)(\eta_{\Lambda^c})  (\tfrac{\mathrm{d}\mu}{\mathrm{d}(\mu \circ \mathrm{flip}_x^{-1})} \circ  \mathrm{flip}_x)(\eta) \\
        &= \int (\mu \circ \mathrm{flip}_x^{-1})(\d \eta) \, g(\eta_\Lambda) \varphi(\eta_{\Lambda^c}) \tfrac{\mathrm{d}\mu}{\mathrm{d}(\mu \circ \mathrm{flip}_x^{-1})}(\eta) 
        = \int \mu(\d \eta) \, \varphi(\eta_{\Lambda^c}) g(\eta_\Lambda) \\
        &= \int \mu(\d \eta)  \, \varphi(\eta_{\Lambda^c}) \Big[ \int \mu_{\Lambda \vert \Lambda^c}(\d\omega_\Lambda\vert \eta) \, g(\omega_\Lambda) \Big].
    \end{align*}
    As we only have to check finitely many \(g\) and \(x\), since \(\Omega_\Lambda\) and \(\Lambda\) are finite, the first identity in \Cref{equation:radon_nikodym} is proven for \(\mu\)-a.e. \(\eta\) and all \(x \in \Lambda\), \(\omega_\Lambda \in \Omega_\Lambda\). The second equality in \Cref{equation:radon_nikodym} follows from translation-invariance of \(\mu\) and \(\mathrm{flip}_x = \theta_{-x} \circ \mathrm{flip_0 \circ \theta_x}\).
\end{proof}

\subsection*{Disclosure of AI use}
The question about asymptotic completeness of free-energy dissipation, and also initial approaches are ours. But, the credit for the core idea behind the main theorem and proof in the context of Ising Glauber dynamics belongs to \texttt{ChatGPT 5.6 Sol}. We had an initial discussion with the older models, which led nowhere, except us explaining to them why this problem is non-trivial. Then, from the new model, which had access to these discussions, after asking again, we got the core idea of the main theorem with a proof using conditional relative entropies. We simplified and clarified the proof approach and baptized the coarse modified log-Sobolev inequality as an object of its own right. We take full responsibility for the correctness of the arguments presented in this note.

\subsection*{Acknowledgments} 
We are grateful for discussions about this question and related problems with (in alphabetical order) Aernout van Enter, Reza Gheissari, Benedikt Jahnel, Christof Külske, Christian Maes, Fabio Martinelli, Charles Newman, Senya Shlosman, and Daniel Stein.

\bibliographystyle{alpha}
\bibliography{references}

\end{document}